\newtheorem{Th}{\scshape  Theorem}[section]
\newtheorem{Lem}[Th]{\scshape  Lemma}
\newtheorem{Rem}[Th]{\scshape Remark}
\title{Laplacian eigenvalue distribution and girth of graphs}
\author{Wenhao Zhen\thanks{Corresponding author, E-mail address:zhenwenhao994@163.com.},
 \ \  Dein Wong\thanks{Corresponding author, E-mail address:wongdein@163.com.    Supported by the National Natural Science Foundation of China
(No.12371025).}, \ \ Songnian Xu 
\\  {\small  \it  School of Mathematics, China University of Mining and Technology, Xuzhou, China} }
\date{}
\begin{document}
	\maketitle
	
\noindent \textbf{Abstract: }
{Let $G$ be a connected graph on $n$ vertices with girth $g$.
Let $m_GI$ denote the number of Laplacian eigenvalues of graph $G$ in an interval $I$.
In this paper, we show that if $G$ is not a cycle, then $m_G(n-g+3,n]\leq n-g$. 
Moreover, we prove that $m_G(n-g+3,n]= n-g$ if and only if $G\cong C_3$ or $G\cong K_{3,2}$ or $G\cong U_1$, where $U_1$ is obtained from a cycle by joining a single vertex with a vertex of this cycle.
}

\noindent\textbf{AMS classification}: 05C50; 15A18

\noindent\textbf{Keywords}: Laplacian eigenvalue; Eigenvalue distribution; Girth
\section{Introduction}
In this paper, all graphs are simple, i.e., they have no loops nor multiple edges.
Let $G$ be a graph with vertex set $V(G)$ and edge set $E(G)$.
The adjacency matrix $A(G)$ of $G$ with $|V(G)|=n$ is an $n\times n$ symmetric matrix whose $(i,j)$ entry   is $1$ if there is an edge between vertex $i$ and vertex $j$, and $0$ otherwise.
Let $N_G(u) = \{v| v \sim u, v \in V(G)\}$ denote the set of neighbors of $u$ in $G$.
The degree of a vertex $u$ in graph $G$, denoted by $d_G(u)$ (or simply $d(u)$), is defined as the number of vertices adjacent to $u$ in $G$.
The Laplace matrix of $G$ is $L(G)=D(G)-A(G)$, where $D(G)$ is the diagonal matrix ${\rm diag} (d(v_1), d(v_2), \dots, d(v_n))$ with $d(v_i)$ the degree of $v_i$, for $i=1,\dots, n$.
It's known that $L(G)$ is a symmetric positive semidefinite matrix and $0$ is one of its eigenvalues.
The eigenvalues of $L(G)$ are called the Laplacian eigenvalues of $G$, and we denote them in non-decreasing order by $0=\mu_n(G)\leq \cdots \leq \mu_1(G)$.
Denote by $m_{G}(\lambda)$ the  multiplicity of $\lambda$ as an eigenvalue of $L(G)$.
Let $m_GI$ denote the number of Laplacian eigenvalues of $G$ in an interval $I$.

It is well known that $m_G[0, n]=n$ for any graph $G$.
The distribution of eigenvalues in an interval $I\subseteq [0,n]$ has attracted considerable attention in the past two decades. 
Many researchers established some bounds of $m_GI$ in terms of different parameters of graphs. 
Grone et al. \cite{Grone2} proved that $m_G[0, 1)\geq q(G)$, where $q(G)$ is the number of quasi-pendant vertices in $G$.
Merris \cite{Merris1} obtained that $m_G(2, n]\geq q(G)$ for connected graph $G$ with $n>2q(G)$.
Guo et al. \cite{Guo1} showed that if $G$ is a connected graph with matching number $m(G)$, then $m_G(2, n]>m(G)$, where $n>2m(G)$.
Recently, Jacobs et al. \cite{Jacobs1} and Sin \cite{Sin1} independently proved that $m_G[0,2-\frac{2}{n})\geq \frac{n}{2}$ if $G$ is a tree of order $n$, which was conjectured in \cite{Trevisan1}.
Ahanjideh et al. \cite{Ahanjideh1} showed that $m_G(n-\alpha(G),n]\leq n-\alpha(G)$ and $m_G(n-d(G)+3, n]\leq n-d(G)-1$, where $\alpha(G)$ and $d(G)$ are the independence number and the diameter of $G$ respectively.
More recently, Xu and Zhou showed that $m_G[n-d(G)+2, n]\leq n-d(G)$ in \cite{Xu1}, which was conjectured in \cite{Ahanjideh1} and $m_G[n-d(G)+1, n]\leq n-d(G)+1$ in \cite{Xu2}.

\begin{figure}
	\centering
	\includegraphics[width=0.3\linewidth]{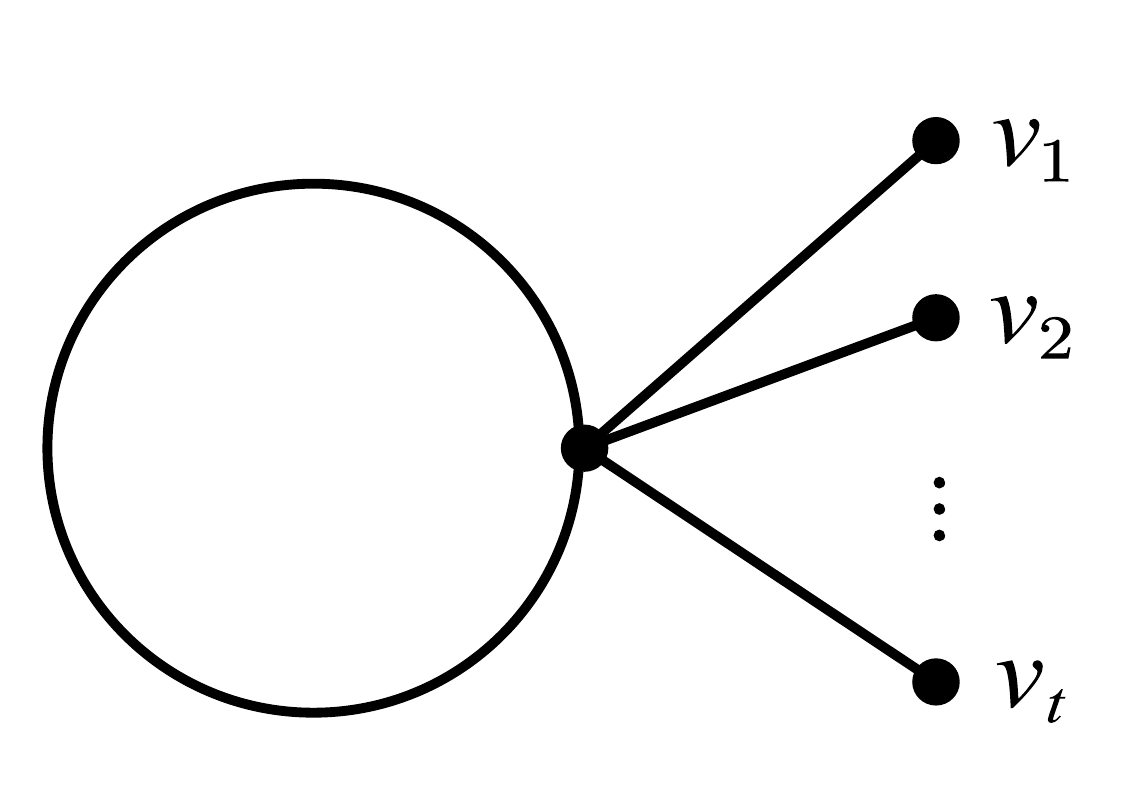}
	\caption{Graph $U_t$.}
	\label{fig:1}
\end{figure}

The girth of a graph $G$, denoted as $g(G)$ ($g$ for short), is defined as the length (number of edges) of a shortest cycle contained in $G$.
If $G$ is acyclic, its girth is conventionally considered to be infinite.
Be inspired by the above works, we consider the bounds of $m_GI$ in terms of girth of a graph.
The unicyclic graph $U_t$ is obtained from a cycle $C$ by joining $t$ pairwise non-adjacent vertices to a vertex of $C$ (See Figure \ref{fig:1}).
In this paper, we show that if $G$ is not a cycle, then $m_G(n-g+3,n]\leq n-g$. 
Moreover, we prove that $m_G(n-g+3,n]= n-g$ if and only if $G\cong C_3$ or $G\cong K_{3,2}$ or $G\cong U_1$.

\section{Preliminaries}
We start with some basic symbols and concepts.
For a subset $W$ of $V(G)$, we denote by $G[W]$ and  $G-W$ the {induced} subgraph of $G$ with vertex set $W$ and $V(G)\backslash W$, respectively.
Let $G$ be a graph, $x$ and $y$ be two vertices in $G$.
The {distance} between $x$ and $y$ in $G$, written as $d_G(x,y)$, is defined as the length of a shortest path between them. 
The chromatic number of $G$, denoted $\chi(G)$, is the smallest positive integer $k$ for which a proper vertex coloring of $G$ using $k$ colors exists. 
We denote by $P_n$ the path with $n$ vertices, $C_n$ the cycle with $n$ vertices, $K_n$ the complete graph with $n$ vertices, and $K_{m_1,\dots,m_t}$ the complete $t$-partite graph with partite sets
of sizes $m_1,\dots,m_t$.

Let $A$ be a Hermitian matrix. We denote by $\rho_k(A)$ the $k$-th largest eigenvalue of $A$ and $\sigma(A) = \{\rho_i(A): i=1,\dots, n\}$ is the spectrum of $A$.
If $\rho$ is an eigenvalue of $A$ with multiplicity $s\geq 2$, then we write it as $\rho^{[s]}$ in $\sigma(A)$.
The spectrum of $L(G)$ is called the Laplacian spectrum of $G$.

We now present several lemmas that are essential for proving our main results.

\begin{Lem}\label{Lem DeL}{\rm (\cite{Mohar1}, Theorem 3.2)}
Let $G = (V,E)$ be a graph with edge set $E$, and let $e \in E$. Then the Laplacian eigenvalues satisfy:
$$\mu_1(G)\geq \mu_1(G - e) \geq \mu_2(G) \nonumber\geq \cdots \geq \mu_{n-1}(G - e) \geq \mu_n(G) = \mu_n(G - e) = 0.$$
\end{Lem}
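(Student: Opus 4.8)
The plan is to realize $L(G)$ as a rank-one positive semidefinite perturbation of $L(G-e)$ and then appeal to classical perturbation theory. Write $e=uv$ and let $b\in\mathbb{R}^{n}$ be the vector carrying $+1$ in the coordinate of $u$, $-1$ in the coordinate of $v$, and $0$ elsewhere. Comparing entries shows at once that $L(G)=L(G-e)+bb^{\top}$: deleting $e$ drops the diagonal degrees at $u$ and $v$ by one and removes the $\pm 1$ off-diagonal pair, which is precisely the effect of subtracting the symmetric rank-one matrix $bb^{\top}\succeq 0$ (with $b^{\top}b=2$). This single algebraic identity is the whole content of the lemma in disguise.

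With this decomposition in hand, the entire interlacing chain is exactly the conclusion of the standard interlacing theorem for a rank-one positive semidefinite update, applied to $A=L(G-e)$ and $A+bb^{\top}=L(G)$. If one prefers a self-contained derivation, the two halves of the chain come from Weyl's inequalities. The Loewner bound $L(G)\succeq L(G-e)$, together with the Courant--Fischer max-min formula, gives $\mu_k(G)\ge\mu_k(G-e)$ for every $k$. The complementary estimates $\mu_k(G-e)\ge\mu_{k+1}(G)$ follow from the Weyl inequality $\rho_{i+j-n}(X+Y)\ge\rho_i(X)+\rho_j(Y)$ taken with $X=L(G)$, $Y=-bb^{\top}$ and $j=n-1$, using that $-bb^{\top}$ has eigenvalue $0$ of multiplicity $n-1$; this yields $\rho_{i-1}(L(G-e))\ge\rho_i(L(G))$, i.e. $\mu_{k}(G-e)\ge\mu_{k+1}(G)$. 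Interleaving these two families reproduces the displayed chain of inequalities.

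It remains only to pin down the two endpoints. Both $L(G)$ and $L(G-e)$ annihilate the all-ones vector, so each is singular with smallest eigenvalue $0$; hence $\mu_n(G)=\mu_n(G-e)=0$, which is the closing equality. I do not anticipate a genuine obstacle, since the statement is a textbook consequence of rank-one perturbation theory; the only place demanding care is the index bookkeeping when combining the two Weyl families, so that a single additive rank-one term produces exactly the tight one-step interlacing pattern claimed rather than a weaker shift.
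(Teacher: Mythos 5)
Your proof is correct. The paper does not prove this lemma at all — it is quoted from Mohar's survey (Theorem 3.2 of the cited reference) — so there is no in-paper argument to compare against; your derivation via the rank-one identity $L(G)=L(G-e)+bb^{\top}$ with $b=e_u-e_v$, combined with Weyl's inequalities (or equivalently the standard interlacing theorem for a positive semidefinite rank-one update) and the observation that both Laplacians are positive semidefinite and annihilate the all-ones vector, is the standard and complete proof of this statement.
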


\begin{Lem}\label{Lem Int}{\rm[Cauchy's interlacing inequality] (\cite{Horn1}, Theorem 4.3.28)}
Let $M$ be a Hermitian matrix of order $n$ and $B$ its principal submatrix of order $p$. Then the eigenvalues satisfy
$\rho_{n-p+i}(M) \leq \rho_i(B) \leq \rho_i(M)$ for $i = 1, \ldots, p$.
\end{Lem}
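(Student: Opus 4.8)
The plan is to derive both interlacing inequalities from the Courant--Fischer variational characterization of the eigenvalues of a Hermitian matrix, which I shall take as the fundamental tool. First I would reduce to a convenient form: since $B$ is a principal submatrix of $M$, after a permutation similarity (which leaves the spectrum of $M$ unchanged) I may assume that $B$ occupies the leading $p\times p$ block of $M$. Writing $\iota:\mathbb{C}^{p}\to\mathbb{C}^{n}$ for the isometric embedding that appends $n-p$ zeros to a vector, one checks immediately that $\iota(y)^{*}M\iota(y)=y^{*}By$ and $\|\iota(y)\|=\|y\|$ for every $y\in\mathbb{C}^{p}$. This single algebraic identity is what transfers quadratic-form information from $B$ to $M$, and it is the crux on which both bounds rest.

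For the upper bound $\rho_i(B)\le\rho_i(M)$ I would use the max--min form
\[
\rho_i(B)=\max_{\substack{T\subseteq\mathbb{C}^{p}\\ \dim T=i}}\;\min_{\substack{y\in T\\ y\neq 0}}\frac{y^{*}By}{y^{*}y}.
\]
Given any $i$-dimensional subspace $T\subseteq\mathbb{C}^{p}$, its image $\iota(T)$ is an $i$-dimensional subspace of $\mathbb{C}^{n}$ on which the Rayleigh quotient of $M$ agrees with that of $B$; hence the inner minimum over $T$ equals the minimum over $\iota(T)$, which is at most the maximum over all $i$-dimensional subspaces of $\mathbb{C}^{n}$, namely $\rho_i(M)$. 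Taking the maximum over $T$ then yields the claim.

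For the lower bound $\rho_{n-p+i}(M)\le\rho_i(B)$ I would run the dual argument with the min--max form
\[
\rho_i(B)=\min_{\substack{T\subseteq\mathbb{C}^{p}\\ \dim T=p-i+1}}\;\max_{\substack{y\in T\\ y\neq 0}}\frac{y^{*}By}{y^{*}y}.
\]
Again replacing $T$ by $\iota(T)$, a subspace of $\mathbb{C}^{n}$ of the same dimension $p-i+1$, the inner maximum is at least the minimum, over all $(p-i+1)$-dimensional subspaces of $\mathbb{C}^{n}$, of the maximal Rayleigh quotient. Matching dimensions through $n-j+1=p-i+1$ identifies this quantity as $\rho_{n-p+i}(M)$, and taking the minimum over $T$ gives $\rho_i(B)\ge\rho_{n-p+i}(M)$.

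The main obstacle is bookkeeping rather than anything conceptual: I must keep the two variational formulas and their subspace dimensions perfectly aligned so that the min--max step lands on the index $n-p+i$ and not on a neighbouring one. If the Courant--Fischer theorem is not available to be cited directly, the preliminary work would be to establish it from the spectral theorem via a dimension-count argument: any test subspace of the relevant dimension must intersect the span of an appropriate block of eigenvectors nontrivially, and evaluating the Rayleigh quotient on a vector in that intersection produces the required inequality. That intersection step is the only place a genuine (if standard) inequality is generated; everything else in the proof is the purely formal substitution $y\mapsto\iota(y)$.
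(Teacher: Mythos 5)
Your proof is correct: the permutation reduction to the leading principal block, the Rayleigh-quotient identity $\iota(y)^{*}M\iota(y)=y^{*}By$, and the dimension matching $n-j+1=p-i+1$ that lands the min--max bound exactly on the index $n-p+i$ are all sound, and the index range $i=1,\ldots,p$ keeps $n-p+i$ within $\{1,\ldots,n\}$ as required. Note that the paper itself gives no proof of this lemma---it is cited verbatim from Horn and Johnson (Theorem 4.3.28)---and your Courant--Fischer argument is essentially the standard proof given in that reference, so your approach coincides with the one the paper implicitly relies on.
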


\begin{Lem}\label{Lem Weyl}{\rm[Weyl's inequalities] (\cite{So1}, Theorem 1.3)}
Let $A$ and $B$ be Hermitian matrices of order $n$. For $1 \leq i, j \leq n$ with $i + j - 1 \leq n$, $$\rho_{i+j-1}(A+B) \leq \rho_i(A) + \rho_j(B)$$ with equality if and only if there exists a nonzero vector $x$ such that $\rho_{i+j-1}(A+B) = (A+B)x$, $\rho_i(A)x = Ax$ and $\rho_j(B)x = Bx$.
\end{Lem}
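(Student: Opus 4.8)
The plan is to derive Weyl's inequality from the Courant–Fischer min–max characterization of the eigenvalues of a Hermitian matrix, and then to read off the equality condition by examining precisely when every inequality in that derivation is tight. Throughout I write $R_M(x)=\frac{x^*Mx}{x^*x}$ for the Rayleigh quotient of a Hermitian matrix $M$ at a nonzero vector $x$.

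First I would recall the two dual forms of the Courant–Fischer theorem: for a Hermitian $M$ of order $n$ and each $k$,
$$\rho_k(M)=\max_{\dim S=k}\ \min_{0\neq x\in S}R_M(x)=\min_{\dim S=n-k+1}\ \max_{0\neq x\in S}R_M(x).$$
Set $k=i+j-1$. Let $S_A$ be the span of eigenvectors of $A$ associated with $\rho_i(A),\rho_{i+1}(A),\dots,\rho_n(A)$, so $\dim S_A=n-i+1$ and $R_A(x)\le\rho_i(A)$ for every nonzero $x\in S_A$; define $S_B$ analogously for $B$ with $\dim S_B=n-j+1$. A dimension count gives $\dim(S_A\cap S_B)\ge(n-i+1)+(n-j+1)-n=n-k+1$, and for every nonzero $x\in S_A\cap S_B$,
$$R_{A+B}(x)=R_A(x)+R_B(x)\le\rho_i(A)+\rho_j(B).$$
Choosing a subspace of dimension exactly $n-k+1$ inside $S_A\cap S_B$ and inserting it into the min–max form yields $\rho_{i+j-1}(A+B)\le\rho_i(A)+\rho_j(B)$, the asserted inequality.

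For the equality condition, the backward implication is immediate: if a nonzero $x$ satisfies $Ax=\rho_i(A)x$, $Bx=\rho_j(B)x$ and $(A+B)x=\rho_{i+j-1}(A+B)x$, then adding the first two gives $(A+B)x=(\rho_i(A)+\rho_j(B))x$, and comparing with the third forces $\rho_{i+j-1}(A+B)=\rho_i(A)+\rho_j(B)$. The forward implication carries the real content. Assuming equality, I would let $x_0$ attain $\max_{0\neq x\in S_A\cap S_B}R_{A+B}(x)$; the chain of inequalities above then collapses, forcing $R_{A+B}(x_0)=\rho_i(A)+\rho_j(B)$. Since $R_A(x_0)\le\rho_i(A)$ and $R_B(x_0)\le\rho_j(B)$ while their sum equals $\rho_i(A)+\rho_j(B)$, both bounds must be tight: $R_A(x_0)=\rho_i(A)$ and $R_B(x_0)=\rho_j(B)$.

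The key remaining step is to upgrade ``the Rayleigh quotient is saturated'' to ``$x_0$ is a genuine eigenvector.'' Expanding $x_0$ in an orthonormal eigenbasis of $A$ spanning $S_A$, the identity $R_A(x_0)=\rho_i(A)$ annihilates every component belonging to an eigenvalue strictly smaller than $\rho_i(A)$, so $Ax_0=\rho_i(A)x_0$; the same argument for $B$ gives $Bx_0=\rho_j(B)x_0$. Hence $(A+B)x_0=(\rho_i(A)+\rho_j(B))x_0=\rho_{i+j-1}(A+B)x_0$, exhibiting the required common eigenvector. I expect this saturation-to-eigenvector passage to be the main obstacle, since it is exactly where one must invoke the full spectral decomposition rather than a one-sided Rayleigh estimate, and where care is needed so that the dimension count behaves correctly when $\rho_i(A)$ or $\rho_j(B)$ is a repeated eigenvalue.
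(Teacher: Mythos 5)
Your proposal is correct, but there is nothing in the paper to compare it against: the paper does not prove this lemma at all — it is quoted as a known result from W. So's paper (\cite{So1}, Theorem 1.3) and used as a black box. Your argument is a legitimate self-contained proof: the Courant--Fischer derivation of the inequality via the intersection $S_A\cap S_B$ is standard and correctly executed (the dimension count $\dim(S_A\cap S_B)\ge n-(i+j-1)+1$ is right), the backward equality implication is trivial as you say, and the forward implication is handled properly — the collapse of the inequality chain at a maximizer $x_0$, the splitting of the saturated sum into $R_A(x_0)=\rho_i(A)$ and $R_B(x_0)=\rho_j(B)$, and the passage from a saturated Rayleigh quotient to a genuine eigenvector via the spectral expansion (components with eigenvalue strictly below $\rho_i(A)$ must vanish) are all sound. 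For what it is worth, So's own route is a close cousin of yours but avoids invoking the min--max characterization: one intersects \emph{three} invariant subspaces — the span $U$ of eigenvectors of $A+B$ for its top $i+j-1$ eigenvalues together with your $S_A$ and $S_B$ — getting $\dim(U\cap S_A\cap S_B)\ge 1$, and then squeezes $\rho_{i+j-1}(A+B)\le R_{A+B}(x)\le\rho_i(A)+\rho_j(B)$ for a nonzero $x$ in that intersection; the equality analysis then runs on this single vector exactly as in your saturation step. That variant buys a slightly more symmetric treatment of the equality case (the candidate common eigenvector appears at the outset rather than as a maximizer), but the two proofs are of essentially the same variational character and either would serve.
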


\begin{Lem}\label{Lem CE}{\rm(\cite{Anderson1})}
(1) If $G$ is the cycle with $n$ vertices, then the Laplacian eigenvalues of $G$ are $4\sin^2(k\pi /n), \, k = 1, 2, \ldots, n.$

(2) If $G$ is the path with $n$ vertices, then the Laplacian eigenvalues of $G$ are $4\sin^2((n-k)\pi/2n), \, k = 1, 2, \ldots, n.$
\end{Lem}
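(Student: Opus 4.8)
The plan is to treat the two cases separately, exploiting the very different structures of the cycle and the path. For the cycle $C_n$, the key observation is that $C_n$ is $2$-regular, so $D(C_n) = 2I_n$ and hence $L(C_n) = 2I_n - A(C_n)$. Since $A(C_n)$ is a circulant matrix with symbol $z + z^{-1}$, its eigenvalues are obtained by evaluating this symbol at the $n$-th roots of unity: $\rho_k = 2\cos(2\pi k/n)$ for $k = 0, 1, \ldots, n-1$. The Laplacian eigenvalues are then $2 - 2\cos(2\pi k/n)$, and the half-angle identity $1 - \cos\varphi = 2\sin^2(\varphi/2)$ converts each into $4\sin^2(k\pi/n)$. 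Reindexing $k$ over $\{1, 2, \ldots, n\}$, with $k = n$ reproducing the zero eigenvalue that came from $k = 0$, yields exactly the claimed list.

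For the path $P_n$ the Laplacian is no longer a scalar shift of the adjacency matrix, so I would instead solve the eigenvalue equation directly. Writing $L(P_n)$ as the tridiagonal matrix with diagonal $(1, 2, \ldots, 2, 1)$ and all off-diagonal entries equal to $-1$, the eigenvector equation at an interior index $j$ reads $-x_{j-1} + 2x_j - x_{j+1} = \mu x_j$. The ansatz $x_j = e^{\mathrm{i}\theta j}$ forces $\mu = 2 - 2\cos\theta = 4\sin^2(\theta/2)$, so the general interior solution is a combination of $\cos(\theta j)$ and $\sin(\theta j)$. The two degree-one endpoints impose the boundary relations $(1-\mu)x_1 = x_2$ and $(1-\mu)x_n = x_{n-1}$, which, after introducing ghost nodes $x_0, x_{n+1}$ made consistent with the interior recurrence, are equivalent to the Neumann-type conditions $x_0 = x_1$ and $x_{n+1} = x_n$.

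The convenient ansatz is then $x_j = \cos\!\big(\theta(j - \tfrac12)\big)$, for which $x_0 = x_1$ holds automatically thanks to the half-integer shift. The remaining condition $x_{n+1} = x_n$ becomes $\cos\!\big(\theta(n+\tfrac12)\big) = \cos\!\big(\theta(n-\tfrac12)\big)$, and a product-to-sum expansion reduces this to $\sin(n\theta)\sin(\theta/2) = 0$. The factor $\sin(\theta/2)=0$ contributes only the constant solution already accounted for, so the admissible values are $\theta = m\pi/n$ for $m = 0, 1, \ldots, n-1$ (with $m=0$ giving the all-ones eigenvector for $\mu=0$), yielding the $n$ eigenvalues $\mu = 4\sin^2\!\big(m\pi/(2n)\big)$. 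Substituting $m = n - k$ as $k$ runs over $\{1, \ldots, n\}$ recovers the stated form $4\sin^2\!\big((n-k)\pi/(2n)\big)$.

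The main obstacle, and the step requiring genuine care, is the translation of the endpoint equations for the degree-one vertices into clean boundary conditions: the periodic ansatz that works for the cycle fails here, and one must verify both that the ghost-node reformulation is consistent with the interior recurrence and that the half-shifted cosine ansatz captures \emph{every} eigenvector rather than only a subfamily. I would close by confirming that the $n$ values $4\sin^2\!\big(m\pi/(2n)\big)$, $m = 0, \ldots, n-1$, are pairwise distinct, so that the ansatz produces a full eigenbasis and no eigenvalue is missed.
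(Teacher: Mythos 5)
Your proof is correct, but note that there is no paper proof to compare against: the lemma is quoted from Anderson and Morley \cite{Anderson1} and used as a black box, so any proof you give is necessarily your own route. Your derivation is sound and self-contained. For the cycle, $L(C_n)=2I-A(C_n)$ plus the circulant diagonalization of $A(C_n)$ gives $2-2\cos(2\pi k/n)=4\sin^2(k\pi/n)$ at once, and the reindexing over $k=1,\dots,n$ is harmless since $k=n$ reproduces the zero eigenvalue. For the path, the delicate step is exactly the one you flag: the ghost-node reformulation is consistent (subtracting the true first-row equation $(1-\mu)x_1=x_2$ from the extended recurrence at $j=1$ forces $x_0=x_1$, and symmetrically at $j=n$), the half-shifted cosines $x_j=\cos\big(\theta(j-\tfrac12)\big)$ with $\theta=m\pi/n$ satisfy both boundary conditions via the product-to-sum identity, and your closing distinctness argument — the values $4\sin^2\big(m\pi/(2n)\big)$, $m=0,\dots,n-1$, are pairwise distinct because $m\pi/(2n)\in[0,\pi/2)$ where $\sin^2$ is strictly increasing — is precisely what turns a family of candidate eigenpairs into the full spectrum of the $n\times n$ matrix, so no eigenvector-completeness issue remains. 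One small point worth making explicit: at $m=n$ (i.e. $\theta=\pi$) the ansatz degenerates to the zero vector, which is why the count stops at $n-1$; this is consistent with the fact that $4$ is never a Laplacian eigenvalue of a path. With the substitution $m=n-k$ your list matches the statement's indexing exactly.
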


\begin{Lem}\label{Lem D+1}{\rm(\cite{Grone1}, Corollary 2)}
Let $G$ be a graph on $n$ vertices with maximum degree $\Delta \geq 1$. Then $\mu_1(G) \geq \Delta + 1$. For a connected graph $G$ on $n$ vertices, equality holds if and only if $\Delta = n - 1$.
\end{Lem}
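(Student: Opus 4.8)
The plan is to bound $\mu_1(G)$ from below via the Rayleigh quotient with a test vector supported on the closed neighborhood of a maximum-degree vertex, and then to read off the equality condition from the slack in that estimate. Throughout I would use the variational characterization $\mu_1(G)=\max_{x\neq 0} x^{\top}L(G)x/x^{\top}x$ together with the standard identity $x^{\top}L(G)x=\sum_{\{i,j\}\in E}(x_i-x_j)^2$.

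First I would fix a vertex $v$ with $d(v)=\Delta$ and neighbors $u_1,\dots,u_\Delta$, and take the test vector $x$ given by $x_v=\Delta$, $x_{u_i}=-1$ for $1\le i\le\Delta$, and $x_w=0$ for every other vertex $w$. A direct computation yields $x^{\top}x=\Delta(\Delta+1)$ and $x^{\top}L(G)x=\Delta(\Delta+1)^2+t$, where $t\ge 0$ is the number of edges joining $N(v)$ to $V(G)\setminus(\{v\}\cup N(v))$; edges inside $N(v)$ and edges among the remaining vertices contribute $0$. Hence
$$\mu_1(G)\ge \frac{x^{\top}L(G)x}{x^{\top}x}=(\Delta+1)+\frac{t}{\Delta(\Delta+1)}\ge \Delta+1,$$
which settles the inequality for every graph with $\Delta\ge 1$.

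For the equality case with $G$ connected, the ``if'' direction is quick: if $\Delta=n-1$, so $v$ is universal, I would verify directly that the same vector $x$ is a genuine eigenvector of $L(G)$ with eigenvalue $n$ (checking $(L(G)x)_v=n x_v$ and $(L(G)x)_{u_i}=-n=n x_{u_i}$, which holds irrespective of the internal adjacencies among the non-$v$ vertices); combined with the inequality this gives $\mu_1(G)=n=\Delta+1$. Alternatively one may simply invoke $\mu_1(G)\le n$. For the ``only if'' direction I would return to the displayed estimate: since the Rayleigh quotient of $x$ is at most $\mu_1(G)$, the equality $\mu_1(G)=\Delta+1$ forces $t/(\Delta(\Delta+1))\le 0$, hence $t=0$. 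Thus no edge joins $N(v)$ to $S:=V(G)\setminus(\{v\}\cup N(v))$, and $v$ itself is non-adjacent to $S$, so connectivity of $G$ forces $S=\emptyset$, i.e. $d(v)=n-1$ and $\Delta=n-1$.

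I expect the main obstacle to be the equality analysis rather than the inequality, which is a one-line Rayleigh-quotient estimate once the test vector is written down. The delicate points are (i) correctly tracking which edge contributions vanish so that the residual term $t$ is identified as precisely the edges leaving the closed neighborhood of $v$, and (ii) in the ``only if'' direction, converting $t=0$ into the structural conclusion $\Delta=n-1$ by invoking connectivity. The role of connectivity is essential, since without it the statement fails: a disjoint union of a star $K_{1,\Delta}$ with any lower-degree component still satisfies $\mu_1=\Delta+1$ while $\Delta<n-1$.
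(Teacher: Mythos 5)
The paper does not prove this lemma; it is quoted from Grone and Merris (\cite{Grone1}, Corollary 2), so there is no in-paper argument to compare against. Your proof is correct and self-contained. The Rayleigh-quotient computation with $x_v=\Delta$, $x_{u_i}=-1$, $x_w=0$ is exact: $x^{\top}x=\Delta(\Delta+1)$ and $x^{\top}L(G)x=\Delta(\Delta+1)^2+t$ with $t$ the number of edges leaving the closed neighborhood of $v$, so $\mu_1(G)\geq (\Delta+1)+t/(\Delta(\Delta+1))\geq\Delta+1$; the equality analysis ($t=0$ plus connectedness forces $S=\emptyset$, and conversely a universal vertex makes $x$ an eigenvector for $n$, whence $\mu_1=n$ since $\mu_1\leq n$) is sound, and your closing remark that connectivity is genuinely needed (e.g.\ $K_{1,\Delta}$ plus a small component) is a correct sanity check. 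A shorter route available from the paper's own toolkit is to note that $G$ contains $K_{1,\Delta}$ together with $n-\Delta-1$ isolated vertices as a spanning subgraph and apply Lemma~\ref{Lem DeL} (monotonicity of $\mu_1$ under edge addition) with $\mu_1(K_{1,\Delta})=\Delta+1$, but that yields the inequality only; your variational argument has the advantage of delivering the equality characterization in the same breath.
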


\begin{Lem}\label{Lem up}{\rm(\cite{Das1}, Theorem 2.1)}
If $G$ is a graph, then
$$\mu_1 \leq \max\{d(u) + d(v) - |N(u) \cap N(v)| : uv \in E(G)\}.$$
\end{Lem}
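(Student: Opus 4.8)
The plan is to argue directly with an eigenvector for $\mu_1$, and to let the extremal structure of that eigenvector single out the edge that certifies the bound. Let $x$ be an eigenvector of $L(G)$ associated with $\mu_1$; assuming $G$ has an edge we have $\mu_1>0$, so $x$ is orthogonal to the eigenvector $\mathbf 1$ of the eigenvalue $0$, whence $\sum_i x_i=0$ and $x$ has a strictly positive entry. I would let $p$ be a vertex with $x_p=\max_i x_i=:a>0$ and let $q$ be a neighbor of $p$ with $x_q=\min_{j\sim p}x_j=:c$; note $p$ is not isolated (else $(Lx)_p=0=\mu_1 x_p$ forces $x_p=0$), so $q$ exists, and the edge $pq$ will be the one realizing the bound. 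The two coordinate eigenvalue equations read $\mu_1 x_p=d(p)x_p-\sum_{j\sim p}x_j$ and $\mu_1 x_q=d(q)x_q-\sum_{k\sim q}x_k$.

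The crucial step is to \emph{subtract} these equations rather than add them. Writing $T=N(p)\cap N(q)$, each common neighbor $w\in T$ contributes $-x_w$ from the first sum and $+x_w$ from the second, so these terms cancel and leave
\[
\mu_1(x_p-x_q)=d(p)x_p-d(q)x_q-\sum_{j\in N(p)\setminus T}x_j+\sum_{k\in N(q)\setminus T}x_k .
\]
This cancellation is precisely what converts the double count of the common neighbors into the single count recorded by $d(u)+d(v)-|N(u)\cap N(v)|=|N(u)\cup N(v)|$, and it is the heart of the argument.

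It then remains to estimate the right-hand side. By Lemma \ref{Lem D+1} we have $\mu_1\ge\Delta+1>d(p)$, so we may assume $\mu_1>d(p)$ (if instead $\mu_1\le d(p)$, then $\mu_1\le d(p)\le d(p)+d(q)-|T|$ is immediate since $|T|\le d(q)$). From $\mu_1>d(p)$ we get $\sum_{j\sim p}x_j=(d(p)-\mu_1)a<0$, hence $c<0<a$ and $x_p-x_q=a-c>0$, which legitimizes the final division. I would then bound every surviving term by an extremal value: $x_k\le a$ for all $k$ (global maximum), and $x_j\ge c$ for every $j\in N(p)$ (minimum over the neighbors of $p$), so $-x_j\le -c=|c|$. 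Since $|N(p)\setminus T|=d(p)-|T|$ and $|N(q)\setminus T|=d(q)-|T|$, the right-hand side is at most $(d(p)+d(q)-|T|)a+(d(p)+d(q)-|T|)|c|=(d(p)+d(q)-|T|)(a-c)$. Dividing by $a-c>0$ gives $\mu_1\le d(p)+d(q)-|N(p)\cap N(q)|$, and since $pq\in E(G)$ this is at most the asserted maximum.

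The main obstacle — and the only genuinely delicate point — is arranging the two vertices and the sign of the combination so that the common neighbors drop out and the leftover coefficients match. Taking $p$ at the global maximum, $q$ at the minimum over $N(p)$, and subtracting is exactly what makes the coefficient of $a$ and the coefficient of $|c|$ both equal $d(p)+d(q)-|T|$, so that the factor $(a-c)$ cancels cleanly. A naive two-vertex estimate of Cauchy/Weyl type (Lemmas \ref{Lem Int}, \ref{Lem Weyl}), or adding the two equations, only yields the weaker bound $\mu_1\le d(p)+d(q)$ with no common-neighbor correction; securing the symmetry of these two coefficients is the feature to get right.
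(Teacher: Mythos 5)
Your proof is correct, and since the paper gives no proof of this lemma (it is quoted directly from \cite{Das1}, Theorem 2.1), the only comparison is with that source: your argument --- take a vertex $p$ attaining the maximum eigenvector entry, a neighbor $q$ attaining the minimum over $N(p)$, subtract the two eigenvalue equations so that the common neighbors cancel, bound the surviving terms by the extremal entries, and divide by $x_p-x_q>0$ --- is essentially the classical eigenvector argument used by Das himself. All details check out, including the delicate points: the cancellation over $N(p)\cap N(q)$, the coefficient count $|N(p)\setminus T|=d(p)-|T|$, $|N(q)\setminus T|=d(q)-|T|$, and the justification that $x_q<0<x_p$ (hence $x_p-x_q>0$) via $\mu_1>d(p)$, with the trivial fallback when $\mu_1\le d(p)$.
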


\begin{Lem}\label{Lem chromatic number}{\rm(\cite{Wang1}, Theorem 3.1)}
Let $G$ be a connected graph of order $n$ with chromatic number $\chi(G)$. Then $$m_G(n-1,n] \leq \chi(G)-1.$$
\end{Lem}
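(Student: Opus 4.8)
The plan is to pass to the complement graph $\overline{G}$, where the top of the Laplacian spectrum of $G$ becomes the bottom of the spectrum of $\overline{G}$, and where an optimal coloring of $G$ turns into a disjoint union of cliques. First I would record the classical complement identity $L(G) + L(\overline{G}) = nI - J$, with $J$ the all-ones matrix. Since the all-ones vector $\mathbf{1}$ is a common eigenvector with eigenvalue $0$ and $J$ vanishes on $\mathbf{1}^{\perp}$, on $\mathbf{1}^{\perp}$ we have $L(\overline{G}) = nI - L(G)$, so the full Laplacian spectrum of $\overline{G}$ is $\{0\}\cup\{\,n-\mu_i(G):i=1,\dots,n-1\,\}$. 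From this I would read off the reduction identity: for $i\le n-1$, the eigenvalue $\mu_i(G)$ lies in $(n-1,n]$ exactly when $n-\mu_i(G)$ lies in $[0,1)$; since $\mu_n(G)=0$ never lies in $(n-1,n]$ (for $n\ge 2$) while the shared eigenvalue $0$ of $\overline{G}$ always lies in $[0,1)$, this yields
\[
m_G(n-1,n] = m_{\overline{G}}[0,1) - 1 .
\]
It therefore suffices to prove $m_{\overline{G}}[0,1) \le \chi(G)$.

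Next I would exploit the coloring. Fix an optimal proper coloring of $G$ with nonempty color classes $V_1,\dots,V_k$, $k=\chi(G)$. Each $V_j$ is independent in $G$, hence induces a clique in $\overline{G}$; as the classes partition $V(G)$, the disjoint union $F = K_{|V_1|}\cup\cdots\cup K_{|V_k|}$ is a spanning subgraph of $\overline{G}$. Building $\overline{G}$ from $F$ by adding its remaining edges one at a time and applying Lemma \ref{Lem DeL} at each step gives $\mu_i(\overline{G}) \ge \mu_i(F)$ for every $i$, since each added edge weakly increases every Laplacian eigenvalue. Because the eigenvalues are indexed in nonincreasing order, this domination forces $m_{\overline{G}}[0,1) \le m_F[0,1)$: every eigenvalue of $\overline{G}$ smaller than $1$ sits at an index whose $F$-eigenvalue is even smaller, so $F$ has at least as many eigenvalues in $[0,1)$.

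Finally I would compute $m_F[0,1)$ directly. The clique $K_{n_j}$ has Laplacian spectrum $\{0, n_j^{[n_j-1]}\}$, so whether $n_j=1$ or $n_j\ge 2$ it contributes exactly one eigenvalue, namely $0$, to the interval $[0,1)$. Summing over the $k$ components gives $m_F[0,1) = k = \chi(G)$. Chaining the three steps then yields $m_G(n-1,n] = m_{\overline{G}}[0,1) - 1 \le m_F[0,1) - 1 = \chi(G) - 1$, as claimed.

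The only genuinely delicate point is the bookkeeping of the ``$-1$'' in the complement reduction: one must verify that the single shared zero eigenvalue of $\overline{G}$ (coming from $\mathbf{1}$) is precisely what separates $m_{\overline{G}}[0,1)$ from $m_G(n-1,n]$, and that no eigenvalue is miscounted at the boundary values $\mu_i(G)=n$ (which maps to $0$, counted) and $\mu_i(G)=n-1$ (which maps to $1$, excluded on both sides). Everything else is monotonicity of eigenvalues under edge addition together with the explicit clique spectra, so I expect the argument to go through cleanly, with the connectivity hypothesis playing no essential role beyond guaranteeing $\chi(G)\ge 1$.
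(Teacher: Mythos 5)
Your proof is correct. Note first that the paper does not actually prove this lemma --- it is imported without proof from \cite{Wang1} --- so the comparison can only be against the natural argument available from the paper's own toolkit. That argument is the primal version of yours: a proper coloring with $\chi(G)$ classes exhibits $G$ as a spanning subgraph of the complete $\chi$-partite graph $K_{n_1,\dots,n_\chi}$; repeated application of Lemma \ref{Lem DeL} gives $\mu_i(G)\leq \mu_i(K_{n_1,\dots,n_\chi})$ for every $i$, and Lemma \ref{Lem ComP} shows that $K_{n_1,\dots,n_\chi}$ has exactly $\chi(G)-1$ Laplacian eigenvalues in $(n-1,n]$ (the eigenvalue $n$ with multiplicity $\chi(G)-1$, all other eigenvalues being $n-n_j\leq n-1$), which yields the bound in one step. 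Your route is precisely the complementation dual: the complement of $K_{n_1,\dots,n_\chi}$ is your disjoint union of cliques $F$, and your chain $m_G(n-1,n]=m_{\overline{G}}[0,1)-1\leq m_F[0,1)-1=\chi(G)-1$ is sound --- the spectrum identity $\sigma(L(\overline{G}))=\{0\}\cup\{n-\mu_i(G):i\leq n-1\}$ is handled with the correct bookkeeping at the boundary values $n$ and $n-1$, the monotonicity $\mu_i(\overline{G})\geq \mu_i(F)$ is applied in the right direction, and the clique count $m_F[0,1)=\chi(G)$ is exact. What your version buys is that it needs only the trivial spectrum of $K_m$ rather than Lemma \ref{Lem ComP}; what it costs is the complement reduction, which the direct embedding avoids entirely. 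Both arguments, as you correctly observe, make no essential use of connectivity.
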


\begin{Lem}\label{Lem ComP}{\rm(\cite{Ahanjideh1}, Lemma 4.3)}
If $G$ is a complete t-partite graph $K_{r_1, \ldots, r_t}$ with $r_1 + \cdots + r_t = n$ and
$r_1 \leq \cdots \leq r_t$, then its Laplacian spectrum is $\{0, n-r_t^{[r_t-1]}, \dots, n-r_1^{[r_1-1]}, n^{[t-1]}\}$.
\end{Lem}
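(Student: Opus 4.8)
The plan is to prove the claim directly by exhibiting a complete system of Laplacian eigenvectors, partitioned into three families indexed by the parts of the multipartition. Write $V(G)=V_1\cup\cdots\cup V_t$ with $|V_i|=r_i$. Since each vertex of $V_i$ is adjacent to every vertex outside $V_i$, its degree is $n-r_i$, so the diagonal entry of $L(G)$ at a vertex of $V_i$ is $n-r_i$, while the $(u,v)$ off-diagonal entry equals $-1$ when $u,v$ lie in different parts and $0$ when they lie in the same part. Every eigenvector below will be checked against this description.

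Family one is the single vector $\mathbf 1$, which lies in the kernel of every Laplacian and contributes the eigenvalue $0$ with multiplicity one. Family two consists, for each part $V_i$, of the vectors supported on $V_i$ whose entries sum to zero, giving $r_i-1$ independent vectors. For such an $x$ I would compute $(L(G)x)_u$ in two cases: for $u\in V_i$ all neighbours of $u$ lie outside $V_i$ where $x$ vanishes, so $(L(G)x)_u=(n-r_i)x_u$; for $u\notin V_i$ one has $x_u=0$ and the neighbour sum equals $\sum_{w\in V_i}x_w=0$, so $(L(G)x)_u=0$ as well. Hence each such $x$ is an eigenvector for the eigenvalue $n-r_i$, yielding multiplicity at least $r_i-1$ for every $i$.

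Family three consists of the vectors that are constant on each part and orthogonal to $\mathbf 1$, i.e. $x_v=c_i$ for $v\in V_i$ with $\sum_i r_ic_i=0$; this space has dimension $t-1$. For such an $x$ and $u\in V_j$ I would use $\sum_{i\ne j}r_ic_i=-r_jc_j$ to obtain
$$(L(G)x)_u=(n-r_j)c_j-\sum_{i\ne j}r_ic_i=(n-r_j)c_j+r_jc_j=n\,c_j=n\,x_u,$$
so these are eigenvectors for the eigenvalue $n$, contributing multiplicity at least $t-1$.

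It remains to argue that these three families together form a basis, which simultaneously pins down all multiplicities. The supports and constraints make independence transparent: $\mathbf 1$ is orthogonal to both other families, and a nonzero vector common to families two and three would be constant on each part and sum to zero on each part, forcing it to vanish. Counting dimensions gives $1+\sum_{i=1}^t(r_i-1)+(t-1)=1+(n-t)+(t-1)=n$, so the system is complete and the listed values are exactly the Laplacian spectrum, with the stated multiplicities (the values $n-r_i$ coinciding and their multiplicities adding whenever two parts have equal size). The one point needing genuine care is precisely this completeness bookkeeping, in particular justifying the count $t-1$ for the eigenvalue $n$ and confirming that no eigenvalue is undercounted when several $r_i$ agree; once the $n$ vectors are shown independent, no room is left and the spectrum is determined. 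As an alternative one could instead invoke the complement identity $L(G)+L(\overline G)=nI-J$ with $\overline G=\bigcup_i K_{r_i}$, whose Laplacian spectrum consisting of $0^{[t]}$ together with $r_i^{[r_i-1]}$ transforms under $\mu\mapsto n-\mu$ on $\mathbf 1^{\perp}$ into the asserted spectrum; but the direct construction above is self-contained, so I would present that.
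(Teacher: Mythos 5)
The paper never proves this lemma: it is imported verbatim from \cite{Ahanjideh1} (Lemma 4.3), so there is no internal proof to compare yours against, and a self-contained argument like yours is a genuine addition. Your construction is correct and is the standard one: all three families really are eigenvectors (your computations of $(L(G)x)_u$ check out in every case), and the dimension count $1+(n-t)+(t-1)=n$ is right, including the merging of multiplicities when several $r_i$ coincide. The one step you state too loosely is independence: for a sum of three (or more) subspaces, pairwise trivial intersection does \emph{not} in general imply the sum is direct, so your remark that a vector common to families two and three must vanish does not by itself finish the bookkeeping. The clean fix is already implicit in your setup: the families are mutually orthogonal. Indeed $\mathbf{1}$ is orthogonal to family two (each such vector sums to zero) and to family three (the constraint $\sum_i r_i c_i=0$ is exactly orthogonality to $\mathbf{1}$); a family-two vector $x$ supported on $V_i$ pairs with a family-three vector $y$ as $\langle x,y\rangle=c_i\sum_{w\in V_i}x_w=0$; and family-two spaces for distinct parts have disjoint supports. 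Mutual orthogonality of nonzero vectors gives linear independence, and then your count pins down the spectrum exactly as stated. Your alternative route via the complement identity $L(G)+L(\overline{G})=nI-J$, with $\overline{G}$ a disjoint union of cliques, is equally valid and is in fact the quicker derivation; either version would serve as a legitimate replacement for the citation.
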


\section{Proof of the main results}

According to Lemma \ref{Lem CE}, the Laplacian eigenvalues of cycles are well-established; hence, we focus on analyzing the distribution of Laplacian eigenvalues for non-cycle graphs.

\begin{Th}\label{Th up}
Let $G$ be a connected graph of order $n$ with girth $g$. If $G$ is not a cycle, then $$m_G(n-g+3,n]\leq n-g.$$
\end{Th}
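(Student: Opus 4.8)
My plan is to reduce the counting statement to a single eigenvalue bound and then prove that bound by Cauchy interlacing against a carefully chosen $g$-vertex principal submatrix, splitting the analysis by the size of the girth. Since every Laplacian eigenvalue lies in $[0,n]$, the claim $m_G(n-g+3,n]\le n-g$ is equivalent to $\mu_{n-g+1}(G)\le n-g+3$. Because $G$ is connected and not a cycle, $n\ge g+1$ and $G$ contains an induced shortest cycle $C_g$ (a chord would create a shorter cycle). For any $W\subseteq V(G)$ with $|W|=g$, Lemma~\ref{Lem Int} applied to $L(G)$ and its principal submatrix $B=L(G)[W]$ gives $\mu_{n-g+1}(G)\le\rho_1(B)$, and since $B=L(G[W])+\operatorname{diag}(b_v)$ with $b_v$ the number of neighbours of $v$ lying outside $W$, it suffices to exhibit one such $W$ with $\rho_1(B)\le n-g+3$.

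For the two smallest girths I would bypass interlacing. If $g=3$ then $(n-g+3,n]=(n,n]$ is empty and there is nothing to prove. If $g=4$ the interval is exactly $(n-1,n]$, so Lemma~\ref{Lem chromatic number} gives $m_G(n-1,n]\le\chi(G)-1$; since $g=4$ makes $G$ triangle-free, I would check that $\chi(G)\le n-3$ for every connected triangle-free non-cycle on $n\ge 5$ vertices (otherwise a proper $(n-2)$-colouring leaves at least two singleton classes, and for $n\ge 7$ these would be mutually adjacent, forcing a triangle or a merge; the cases $n=5,6$ are checked directly), which yields $m_G(n-1,n]\le n-4=n-g$. For $g\ge 5$ I would take $W=V(C_g)$, so that $B=L(C_g)+S$ with $S=\operatorname{diag}(s_1,\dots,s_g)$ and $s_i$ the number of neighbours of the $i$-th cycle vertex outside the cycle. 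By Lemma~\ref{Lem CE} the eigenvalues of $L(C_g)$ are $4\sin^2(k\pi/g)\le 4$, so Weyl's inequality (Lemma~\ref{Lem Weyl}) gives $\rho_1(B)\le\rho_1(L(C_g))+\max_i s_i\le 4+\max_i s_i$. When $\max_i s_i\le n-g-1$ this already gives $\rho_1(B)\le n-g+3$, settling that subcase.

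The hard part is the concentrated subcase $\max_i s_i=n-g$, where the cycle submatrix bound is genuinely insufficient: already for $U_1$ with $g$ even one computes $\rho_1(L(C_g)+e_1e_1^{\mathsf{T}})>4$, so interlacing against $V(C_g)$ cannot deliver the threshold $n-g+3$. Here I would use the girth to fix the structure. The condition $\max_i s_i=n-g$ forces a single cycle vertex $v_1$ to be adjacent to all $n-g$ outside vertices; for $g\ge 5$ any further edge (between two outside vertices, or from an outside vertex to another cycle vertex) would close a cycle of length at most $2+\lfloor g/2\rfloor<g$, so $G\cong U_{n-g}$. For this graph I would swap the overloaded vertex out, taking $W=\{v_2,\dots,v_g,w\}$ with $w$ a pendant: then $G[W]$ is a path $P_{g-1}$ together with an isolated $w$, the boundary terms turn the path block into $2I-A(P_{g-1})$ whose largest eigenvalue is $2+2\cos(\pi/g)<4$, while $w$ contributes only its degree $1$; hence $\rho_1(B)<4\le n-g+3$.

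The essential difficulty I anticipate is that girth has to be exploited twice and in opposite ways. For large $g$ it is used to force the sparse attachment pattern ($G\cong U_{n-g}$) that lets the vertex swap keep $\rho_1(B)$ below the threshold, since a blind application of interlacing on the cycle overshoots; for small $g$, where the threshold $n-g+3$ is already large, the cycle estimate is too lossy for dense graphs (e.g.\ the $K_{3,2}$ equality example), and the argument must instead be routed through the empty interval ($g=3$) or the chromatic-number bound ($g=4$). Organizing these regimes so that the delicate concentrated estimate and the exceptional dense graphs are cleanly separated is where I expect the real work to lie.
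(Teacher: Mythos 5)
Your proposal is correct, and while its skeleton matches the paper's (reduce to $\mu_{n-g+1}(G)\le n-g+3$, interlace $L(G)$ against the induced shortest cycle, and split on whether some cycle vertex is adjacent to all of $G\setminus C$ -- your ``concentrated'' subcase is exactly the paper's Case 2), you diverge in the two delicate spots and in both places your route is defensible and arguably cleaner. For the concentrated case with $g\ge 5$, where $G\cong U_{n-g}$, the paper keeps the hub $v_1$ in a $(g+1)$-vertex principal submatrix and must invoke Weyl at the \emph{second} eigenvalue, explicitly computing $\rho_2(M)=\tfrac{1+a-\sqrt{a^2-2a+5}}{2}$ and checking monotonicity of an auxiliary function; your choice of $W=\{v_2,\dots,v_g,w\}$ simply deletes the overloaded diagonal entry, so $B$ is block-diagonal with spectral radius $2+2\cos(\pi/g)<4\le n-g+3$ and plain Cauchy interlacing finishes -- no Weyl, no algebra. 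For $g=4$ the paper stays inside the same machinery (it embeds $G$ into $K_{2,n-2}$ and uses Lemmas~\ref{Lem DeL} and~\ref{Lem ComP}), whereas you reroute through Lemma~\ref{Lem chromatic number} and the fact that a connected triangle-free non-cycle on $n\ge 5$ vertices has $\chi(G)\le n-3$; this claim is true and your singleton-class argument works, but note that two adjacent singleton classes do not force a triangle -- you need at least \emph{three} pairwise-adjacent singletons, and the correct count (an optimal colouring with $\ge n-2$ colours has at least $n-4\ge 3$ singleton classes for $n\ge 7$) should be stated, with $n=5,6$ checked separately as you indicate. The trade-off: your argument imports a colouring lemma and a small combinatorial verification for $g=4$, while the paper's is spectrally self-contained; conversely your treatment of $U_{n-g}$ removes the only genuinely computational step in the paper's proof.
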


\begin{proof}
For $g=3$, we have $m_G(n-g+3, n]= m_G(n, n]=0$, which trivially satisfies the inequality $m_G(n-g+3, n] \leq n - g$. 
Henceforth, we assume $g >3 $.

Let $0=\mu_n(G)\leq \cdots \leq \mu_1(G)$ be the Laplacian eigenvalues of $G$. 
Note that $m_G(n-g+3,n]\leq n-g$ holds if and only if $\mu_{n-g+1}(G)\leq n-g+3$.
We therefore proceed to prove the inequality $\mu_{n-g+1}(G)\leq n-g+3$ below.

Let $C:=v_1\sim v_2\sim \cdots \sim v_g\sim v_1$ be a shortest cycle in $G$.
Since $G$ is a connected graph and not a cycle, the subgraph $G \setminus C$ is non-empty, and edges must exist between $G \setminus C$ and $C$.

\textbf{Case 1.} There does not exist a vertex in $C$ adjacent to all vertices in $G\setminus C $.

Let $H$ be the principal submatrix of $L(G)$ corresponding to the vertices $v_1, \ldots, v_g$.
It's obvious that $$H=L(C)+D,$$ where $D={\rm diag} \{d(v_1)-2, \dots, d(v_g)-2\}$.
By Lemma \ref{Lem Int} and Lemma \ref{Lem Weyl}, we have $$\mu_{n-g+1}(G)=\rho_{n-g+1}(L(G))\leq \rho_{1}(H)\leq \rho_1(L(C))+\rho_1(D).$$
According to Lemma \ref{Lem CE}, we have $\rho_1(L(C))=\mu_1(C)\leq 4$.
Since any vertex in $C$ is not adjacent to all vertices in $G\setminus C$, we know $d(v_i)-2\leq n-g-1$ for $i=1,\dots, g$.
Hence, $$\mu_{n-g+1}(G)\leq \rho_1(L(C))+\rho_1(D)\leq 4+n-g-1=n-g+3.$$

\textbf{Case 2.} There exist a vertex in $C$ adjacent to all vertices in $G\setminus C $.

Without loss of generality, assume $v_1$ adjacent to all vertices in $G\setminus C $.
No two vertices in $G \setminus C$ are adjacent.
Suppose, for contradiction, that there exist $u, v \in G \setminus C$ with $u \sim v$.
Then $u \sim v \sim v_1 \sim u$ forms a 3-cycle, contradicting that $g> 3$.

If there exists a vertex $u \in G \setminus C$ adjacent to at least two vertices on $C$, two neighbors of $u$ on $C$ partition $C$ into two paths, denoted as $P_a$ and $P_b$.
Since $g=a+b+2$,  we have $a+3\geq a+b+2,\ b+3\geq a+b+2$, which implies that $a\leq 1,\ b\leq 1$.
Note that $g>3$, one can get $a=b=1$, $g=4$ and $u$ is adjacent to $v_1$ and $v_3$ in $C$.
Define $V_1 = \{ u \in V(G \setminus C) \mid u \sim v_1 \text{ and } u \sim v_3 \}$
and $V_2 = \{ u \in V(G \setminus C) \mid u \sim v_1 \text{ and } u \not\sim v_3 \}$.
Let $G'$ be a graph obtained by adding edges between all vertices of $V_2$ and $v_3$ (if $V_2=\emptyset$, $G'=G$).
Then $G$ is a subgraph of $G'$ and $G'\cong K_{2,n-2}$.
By Lemma \ref{Lem ComP}, we know the Laplacian spectrum of $G'$ is $\{0, 2^{[n-3]}, n-2, n\}$.
By Lemma \ref{Lem DeL}, we have $m_G(n-g+3,n]=m_G(n-1,n]\leq m_{G'}(n-1,n]=1\leq n-g$.

If every vertices in $G \setminus C$ adjacent to exactly one vertex on $C$.
Then $G\cong U_{n-g}$.
Suppose $v_{g+1}\in G\setminus C$.
Let $H'$ be the principal submatrix of $L(G)$ corresponding to the vertices $v_1, \ldots, v_g, v_{g+1}$ in order. Then $$H'=\begin{pmatrix}
L(C) & 0_{g \times 1} \\
O_{1 \times g} & 0
\end{pmatrix} + M,$$
where  $M = (m_{ij})_{(g+1) \times (g+1)}$ with
$$\begin{cases}
n-g & if\ i = j =1, \\
-1 & if\ (i,j) \in \{(1,g+1), (g+1,1)\}, \\
1 &if\ i = j =g+1, \\
0 & otherwise.
\end{cases}$$
By Lemma \ref{Lem Int} and Lemma \ref{Lem Weyl}, we have $$\mu_{n-g+1}(G)=\rho_{n-(g+1)+2}(L(G))\leq \rho_{2}(H')\leq \rho_1(L(C))+\rho_2(M).$$
Let $n-g=a\geq 1$, one can get $\rho_2(M)=\tfrac{1+a-\sqrt{a^2-2a+5}}{2}$.
According to Lemma \ref{Lem CE}, we have $\mu_{n-g+1}(G)\leq \rho_1(L(C))+\rho_2(M)\leq 4+\tfrac{1+a-\sqrt{a^2-2a+5}}{2}$.
Let $f(a)=a+3-\tfrac{1+a-\sqrt{a^2-2a+5}}{2}-4=\tfrac{a+\sqrt{a^2-2a+5}-1}{2}-1$.
It's easy to see the function $f(a)$ is monotonically increasing for $a \geq 1$.
So, $f(a)\geq f(1)= 0$.
Hence, we have $$\mu_{n-g+1}(G)\leq 4+\frac{1+a-\sqrt{a^2-2a+5}}{2}\leq a+3=n-g+3.$$
\end{proof}

\begin{Rem}\label{Rem cycle}
If $G$ is a cycle, the number of Laplacian eigenvalues in the interval $(n-g+3,n]=(3,n]$ can be explicitly determined. 
Suppose $\mu$ is a Laplacian eigenvalue of $G$. 
By Lemma \ref{Lem CE}, we have $\mu=4\sin^2(\frac{k\pi}{n})$ for some integer $1\leq k\leq n$. 
Hence $\mu>3$ if and only if $4\sin^2(\frac{k\pi}{n})=2(1-\cos(\frac{2k\pi}{n}))>3$, which implies that $\frac{n}{3}<k<\frac{2n}{3}$. 
Therefore, $$m_G(n-g+3,n]=\begin{cases}
\lfloor \frac{2n}{3}\rfloor -\lceil \frac{n}{3}\rceil +1 & if\ n\not\equiv 0(\bmod\ 3), \\
\lfloor \frac{2n}{3}\rfloor -\lceil \frac{n}{3}\rceil -1 & if\ n\equiv 0(\bmod\ 3). 
\end{cases}$$
\end{Rem}

Now, we give a characterization for graphs $G$ with $m_G(n-g+3,n]= n-g$.

\begin{Th}\label{Th ex}
Let $G$ be a connected graph of order $n$ with girth $g$. Then $m_G(n-g+3,n]= n-g$ if and only if $G\cong C_{3}$ or $G\cong K_{3,2}$ or $G\cong U_1$.
\end{Th}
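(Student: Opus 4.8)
The plan is to prove both implications. For the (easy) sufficiency direction I would verify each of the three graphs directly. For $C_3$ we have $n=g=3$, so by Remark \ref{Rem cycle} the count $m_{C_3}(3,3]=0=n-g$. For $K_{3,2}$ we have $n=5$, $g=4$, and Lemma \ref{Lem ComP} gives the Laplacian spectrum $\{0,2^{[2]},3,5\}$, so exactly one eigenvalue ($5$) lies in $(n-g+3,n]=(4,5]$, matching $n-g=1$. For $U_1$ (a cycle $C_g$ with one pendant, so $n=g+1$ and $n-g=1$) I would first get $\mu_1(U_1)>4$ from Lemma \ref{Lem D+1}, since the attachment vertex has degree $\Delta=3<n-1$ when $g\ge 4$; and then $\mu_2(U_1)\le \mu_1(U_1-e)=\mu_1(C_g\cup K_1)=\mu_1(C_g)\le 4$ by Lemma \ref{Lem DeL} (deleting the pendant edge $e$) together with Lemma \ref{Lem CE}. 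Hence exactly one Laplacian eigenvalue exceeds $4=n-g+3$, so $m_{U_1}(4,n]=1=n-g$.

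For necessity, assume $m_G(n-g+3,n]=n-g$. The trivial cases are quick: if $g=3$ then $m_G(n,n]=0$ forces $n-g=0$, i.e. $n=3$ and $G\cong C_3$; and if $G$ is a cycle, Remark \ref{Rem cycle} shows the count equals $0=n-g$ only for $C_3$. So assume $g\ge 4$ and $G$ is not a cycle, let $C$ be a shortest cycle, $W=V(G)\setminus V(C)$, and $s=|W|=n-g\ge 1$. Since Theorem \ref{Th up} already gives $\mu_{n-g+1}(G)\le n-g+3$, equality $m_G(n-g+3,n]=n-g$ is equivalent to $\mu_{n-g}(G)>n-g+3$. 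The heart of the proof is the claim that $s\le 1$, which I would prove by showing that $s\ge 2$ forces $\mu_{n-g}(G)\le n-g+3$ and hence $m_G(n-g+3,n]\le n-g-1$, a contradiction. The tool is iterated edge interlacing: from Lemma \ref{Lem DeL}, deleting any set $E'$ of $s-1$ edges yields $\mu_{n-g}(G)=\mu_{s}(G)\le \mu_1(G-E')$, so it suffices to delete $s-1$ edges so that $\mu_1(G-E')\le n-g+3$, which I would bound through Lemma \ref{Lem up}. In Case 2 of Theorem \ref{Th up} this is immediate: when $G\cong U_a$ with $a=s$, deleting one cycle edge at the high-degree vertex produces a tree $T$ with $\mu_1(T)\le a+3$ by Lemma \ref{Lem up}, so $\mu_2(U_a)\le a+3$ and only $\mu_1$ can exceed $a+3$, giving $m_G\le 1\le s-1$; and in the sub-case $G\subseteq K_{2,n-2}$ one gets $m_G(n-1,n]\le 1$ directly from Lemma \ref{Lem ComP}.

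Once $s\le 1$ is in hand the endgame is short. Connectivity and non-cyclicity give $s=1$, so $G$ is $C=C_g$ together with a single extra vertex $u$ joined to $k\ge 1$ vertices of $C$. The girth bound restricts $k$: two neighbors of $u$ on $C$ at cyclic distance $d$ create a cycle of length $d+2\ge g$, so $d\ge g-2$, which exceeds the maximum cyclic distance $\lfloor g/2\rfloor$ when $g\ge 5$; thus $k=1$ and $G\cong U_1$ for $g\ge 5$. For $g=4$ the inequality forces the two neighbors to be antipodal, so $k\le 2$: if $k=1$ then $G\cong U_1$, and if $k=2$ then $G$ is the complete bipartite graph on $\{v_1,v_3\}$ versus $\{v_2,v_4,u\}$, i.e. $G\cong K_{3,2}$. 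Together with the verified sufficiency this yields exactly the three graphs.

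The main obstacle is the Case 1 instance of the claim, namely when no vertex of $C$ is adjacent to all of $W$ and $s\ge 2$. A crude Gershgorin/Weyl estimate of $\mu_1(G-E')$ (or of $\rho_1$ of the order-$(g+1)$ principal submatrix on $V(C)\cup\{w\}$ used with Lemma \ref{Lem Int}) only delivers $n-g+4$, off by one, because a cycle vertex of degree $s+1$ adjacent to the region of the deleted edges inflates the bound. The delicate step will be to choose the $s-1$ deleted edges using the non-domination hypothesis (each cycle vertex misses at least one vertex of $W$) together with the girth constraint that each vertex of $W$ has at most two neighbors on $C$, and at most one when $g\ge 5$; this limits how many cycle vertices can simultaneously reach degree $s+1$ and lets me delete edges so that every surviving edge has endpoint degree-sum at most $n-g+3$. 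Carrying out this combinatorial bookkeeping, rather than the clean spectral steps, is where the real work lies.
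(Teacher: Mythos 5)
Your sufficiency argument is correct, and your bound $\mu_2(U_1)\le\mu_1(U_1-e)=\mu_1(C_g)\le 4$ via Lemma \ref{Lem DeL} is a clean alternative to the paper's appeal to Theorem \ref{Th up}. The endgame (once $|V(G)\setminus V(C)|=1$, girth forces $G\cong U_1$ or $G\cong K_{3,2}$) is also fine. The problem is the core of the necessity, the claim that $s=n-g\ge 2$ forces $m_G(n-g+3,n]\le n-g-1$. You reduce this to finding $s-1$ edges $E'$ with $\mu_1(G-E')\le s+3$, certified by Lemma \ref{Lem up}, and you explicitly defer the ``combinatorial bookkeeping'' of choosing $E'$. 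This is not merely unfinished: the tool cannot succeed in general. Take $G=K_{4,4}$ (connected, girth $4$, $n=8$, $s=4$, and in your ``Case 1'' since no cycle vertex dominates $W$). Any $E'$ with $|E'|=3$ leaves at least one vertex of each part untouched; these two vertices still have degree $4$, are joined by a surviving edge, and have no common neighbours, so the Das bound on $G-E'$ is at least $8=s+4$ for every choice of $E'$. (The conclusion $\mu_4(K_{4,4})=4\le 7$ is of course true, but your method cannot reach it, and the same obstruction appears for any graph containing a large complete bipartite block, which is exactly the regime where extremality is threatened.)

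The paper gets past these dense configurations with tools that have no counterpart in your plan: when a vertex of $G\setminus C$ has two neighbours on $C$ it \emph{adds} edges to embed $G$ into a complete bipartite graph $K_{n_1+2,n_2+2}$ and combines Lemma \ref{Lem ComP} with the interlacing of Lemma \ref{Lem DeL}; for the configuration $G_3$ (two adjacent vertices of $G\setminus C$ attached to adjacent cycle vertices) it identifies explicit eigenvalues from equal rows of $\lambda I-L(G)$ and runs a trace argument on the remaining five eigenvalues; and it invokes the chromatic-number bound of Lemma \ref{Lem chromatic number}. It also needs separate arguments (Claims 1, 3, 4) for vertices far from $C$ and for edges inside $G\setminus C$, carried out with principal submatrices and Weyl's inequality rather than edge deletion. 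Until you supply arguments of this kind for the cases where the degree-sum bound is off by one, the necessity direction has a genuine gap.
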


\begin{proof}
Sufficiency:
If $G\cong C_{3}$, we have $n=g=3$ and $m_G(n-g+3,n]=m_G(n,n]=0=n-g$. 
If $G\cong K_{3,2}$, by Lemma \ref{Lem ComP}, we know the Laplacian spectrum of $G$ is $\{0, 2^{[2]}, 3, n \}$.
Hence $$m_G(n-g+3,n]=m_G(4,5]=1=n-g.$$
Let $\Delta$ be the maximum degree of $G$.
If $G\cong U_1$, by Lemma \ref{Lem D+1}, we have $$\mu_1(G)\geq \Delta+1=4$$ with equality holds if and only if $\Delta = n - 1$.
Note that $\Delta =3< n - 1$, we have $\mu_1(G)>4$, which implies that $m_G(n-g+3,n]=m_G(4,5]\geq 1=n-g$.
Moreover, $m_G(n-g+3,n]\leq n-g$ by Theorem \ref{Th up}.
Hence, $m_G(n-g+3,n]= n-g$.

Necessity:
Let $G$ be a connected graph with $m_G(n-g+3,n]= n-g$ and $C:=v_1\sim v_2\sim \cdots \sim v_g\sim v_1$ be a shortest cycle in $G$. 
For $g=3$, we have $m_G(n-g+3, n]= m_G(n, n]=0=n-g$.  
Hence, $n=g=3$, which implies that $G\cong C_3$. 
In the following, we assume that $g>3$. 
Suppose $G$ is a cycle with girth $g>3$, by Remark \ref{Rem cycle}, we have $m_G(n-g+3, n]= m_G(3, n]\geq 1> n-g=0$, a contradiction.
Now we suppose that $G$ is not a cycle and $G\not\cong K_{3,2}$ and $G\not\cong U_{1}$.

\textbf{Claim 1.} For every vertex $x \in G \setminus C$, the distance from $x$ to $C$ is 1.

Suppose there exists a vertex $u$ such that $d(u,C)>1$.
Let $H$ be the principal submatrix of $L(G)$ corresponding to the vertices $v_1, \ldots, v_g, u$ in order.
Then $$H=\begin{pmatrix}
L(C) & 0_{g \times 1} \\
O_{1 \times g} & 0
\end{pmatrix} + D,$$
where $D={\rm diag}\{d(v_1)-2, \dots, d(v_g)-2, d(u)\}$.
Note that $\mid V(G\setminus C)\mid=n-g$, we have $d(v_i)-2\leq n-g-1$ and $d(u)\leq n-g-1$.
By Lemma \ref{Lem Int}, Lemma \ref{Lem Weyl} and Lemma \ref{Lem CE}, we have
\begin{align*}
\mu_{n-g}(G)=\rho_{n-(g+1)+1}(L(G))&\leq \rho_{1}(H)\\
&\leq \rho_1(L(C))+\rho_1(D)\\
&\leq 4+n-g-1\\
&=n-g+3.
\end{align*}
Hence, $m_G(n-g+3,n]\leq n-g-1$, a contradiction.

\textbf{Claim 2.} Every vertex in $G\setminus C$ is adjacent to exactly one vertex on $C$.

Suppose there exists a vertex $u \in G \setminus C$ adjacent to at least two vertices on $C$.
Similarly to Theorem \ref{Th up}'s proof, we get $g = 4$ and $N_C(u)$ is either $\{v_1, v_3\}$ or $\{v_2, v_4\}$.
From Claim 1, all vertices in $G \setminus C$ have non-empty neighborhoods in $C$.
Hence, $1\leq |N_C(v)|\leq 2$ for any vertex $v\in G\setminus C$.
We define:
\begin{align*}
V_1 &= \big\{ v \in G \setminus C \mid N_C(v) \cap \{v_1, v_3\} \neq \emptyset \big\}, \\
V_2 &= \big\{ v \in G \setminus C \mid N_C(v) \cap \{v_2, v_4\} \neq \emptyset \big\}.
\end{align*}
Then $V_1\cap V_2=\emptyset$ and $V_1\cup V_2=V(G\setminus C)$.
Assume $|V_1|=n_1$ and $|V_2|=n_2$.
Let $G'$ be a graph obtained by adding edges between all vertices of $V_1$ and all vertices $V_2$, between all vertices of $V_1$ and all of $\{v_1, v_3\}$ and between all vertices of $V_2$ and all of $\{v_2, v_4\}$.
Then $G'$ is a complete 2-partite graph $K_{n_1+2,n_2+2}$ with partite sets $V_1\cup{\{v_2, v_4\}}$ and $V_2\cup{\{v_1, v_3\}}$.
By Lemma \ref{Lem ComP}, we know the Laplacian spectrum of $G'$ is $\{0, n_1+2^{[n_2+1]}, n_2+2^{[n_1+1]}, n\}$.
By Lemma \ref{Lem DeL}, we have $$m_G(n-g+3,n]=m_G(n-1,n]\leq m_{G'}(n-1,n]=1. $$
Recall that $G\not\cong K_{3,2}$, we have $n-g>1$. 
Hence, $m_G(n-g+3,n]< n-g$, a contradiction.

\textbf{Claim 3.} If $n-g > 2$ and there exist vertices $u, v \in G \setminus C$ with distinct neighbor in $C$, then $u \sim v$.

Suppose, for contradiction, that there exist $u, v \in G \setminus C$ with $N_C(u)=\{v_t\}$, $N_C(v)=\{v_s\}$ ($t\neq s$) and $u \not\sim v$.
Let $H'$ be the principal submatrix of $L(G)$ corresponding to the vertices $v_1, \ldots, v_g, u, v$ in order.
Then $$H'=\begin{pmatrix}
L(C) & 0_{g \times 2} \\
O_{2 \times g} & I_{2 \times 2}
\end{pmatrix}+D'+M,$$
where $D'={\rm diag}\{d(v_1)-2, \dots, d(v_g)-2, d(u)-1, d(v)-1\}$ and $M = (m_{ij})_{(g+2) \times (g+2)}$ with
$$\begin{cases}
-1 & if\ \{i,j\}\in \{\{t,g+1\}, \{s,g+2\}\}, \\
0 & otherwise.
\end{cases}$$
By Lemma \ref{Lem Int}, Lemma \ref{Lem Weyl} and Lemma \ref{Lem CE}, we have
\begin{align*}
\mu_{n-g}(G)=\rho_{n-(g+2)+2}(L(G))&\leq \rho_{2}(H')\\
&\leq \max\{\rho_1(L(C)),1\}+\rho_2(D'+M)\\
&\leq \max\{\rho_1(L(C)),1\}+\rho_1(M)+\rho_2(D')\\
&\leq 4+\rho_1(M)+\rho_2(D').
\end{align*}
By calculation, we have $\rho_1(M)=1$.
Note that $d(u)-1\leq n-g-2$, $d(v)-1\leq n-g-2$, $d(v_i)-2\leq n-g-1$ for $i=1,\dots, g$ and at most one of $\{d(v_i)-2: i=1,\dots, g\}$ is $n-g-1$ when $n-g> 2$.
Hence, $$\mu_{n-g}(G)\leq 4+\rho_1(M)+\rho_2(D')\leq 4+1+n-g-2=n-g+3, $$a contradiction.

\textbf{Claim 4.} For all $u, v \in G \setminus C$, $u \nsim v$.

\begin{figure}
	\centering
	\includegraphics[width=1.0\linewidth]{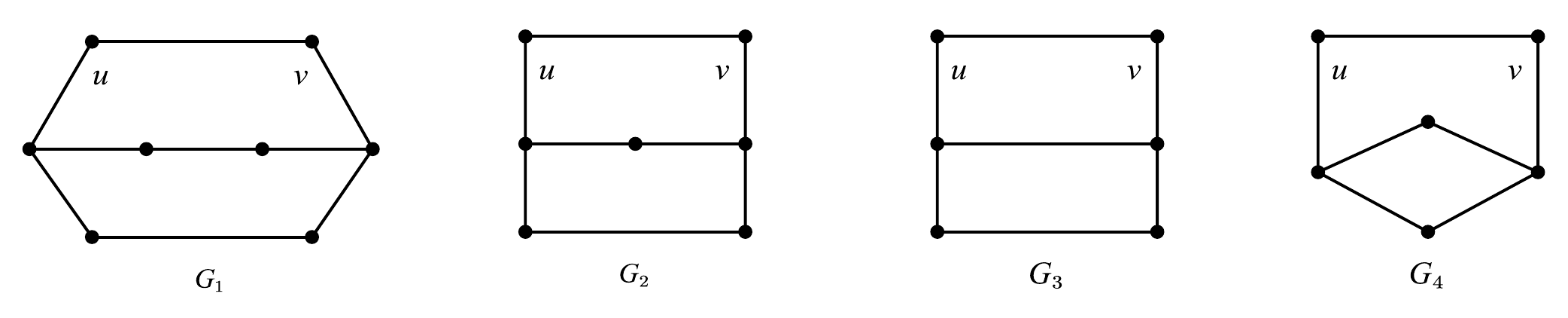}
	\caption{Graphs $G_1,\ G_2,\ G_3$ and $G_4$.}
	\label{fig:2}
\end{figure}

Let $u, v \in G \setminus C$.
If $u, v$ have same neighbor in $C$, then clearly $u \nsim v$; otherwise, a 3-cycle would emerge.
Suppose $N_C(u)=\{v_i\}$, $N_C(v)=\{v_j\}$ ($i\neq j$), and $u\sim v$.
Let $G''$ be the induced subgraph of $G$ with vertex set $V(C)\cup \{u, v\}$, i.e., $G''=G[V(C)\cup \{u, v\}]$.
Note that $v_i$ and $v_j$ partition $C$ into two paths, denoted as $P_a$ and $P_b$ with $a\leq b$ (if $v_i\sim v_j$, then $a=0$).
Since $g=a+b+2$,  we have $a+4\geq a+b+2,\ b+4\geq a+b+2$, which implies that $a\leq b\leq 2$.
Recall that girth $g>3$, one can get $G''$ must be one of $G_1$, $G_2$, $G_3$ or $G_4$ (see Figure \ref{fig:2}). 

If $G''=G_1$ and $n-g>2$, then $g=6$ and $V(G\setminus G'')\neq \emptyset$.
Suppose $w\in V(G\setminus G'')$.
If $N_C(w)\not\subseteq N_C(u)\cup N_C(v)$.
By Claim 3, we know $w\sim u$ and $w\sim v$.
Hence, $w, u, v$ form a 3-cycle, which contradicts $g=6$.
If $N_C(w)\subseteq N_C(u)\cup N_C(v)$.
Without loss of generality, assume $N_C(w)= N_C(u)=\{v_1\}$.
By Claim 3, we know $w\sim v$.
Hence, $w, u, v, v_1$ form a 4-cycle, which contradicts $g=6$.
If $G''=G_1$ and $n-g=2$, i.e., $G=G''=G_1$. 
By Lemma \ref{Lem up}, we have $\mu_1(G)\leq 3+2=5=n-g+3$. 
Hence, we have $m_G(n-g+3,n]=0<n-g$, a contradiction.
Similarly, if $G'' = G_2$, we can again derive a contradiction.

If $G''=G_3$, then $g=4$.
Without loss of generality, assume $N_C(u)=\{v_1\},\ N_C(v)=\{v_2\}$.
Clearly, $N_C(w)\subseteq \{v_1, v_2\}$ for any $w\in V(G\setminus C)$; otherwise, a 3-cycle would emerge.
We define:
\begin{align*}
V_3 &= \big\{ w \in G \setminus C \mid N_C(w)=\{v_1\} \}, \\
V_4 &= \big\{ w \in G \setminus C \mid N_C(w)=\{v_2\} \}.
\end{align*}
Then $V_3\cap V_4=\emptyset$ and $V_3\cup V_4=V(G\setminus C)$.
By Claim 3 and the fact that $u\sim v$, we know all vertices of $V_3$ are adjacent to all vertices of $V_4$.
Suppose that $|V_3|=n_3\geq 1$ and $|V_4|=n_4\geq 1$, then $n=n_3+n_4+4\geq 6$.
As $(n_3+1)I-L(G)$ has $n_4$ equal rows, $(n_3+1)$ is a Laplacian eigenvalue of $G$ with multiplicity at least $n_4-1$.
Similarly, $(n_4+1)$ is a Laplacian eigenvalue of $G$ with multiplicity at least $n_3-1$.
Therefore, the Laplacian spectrum of $G$ must include $0$, $n_3+1^{[n_4-1]}$ and $n_4+1^{[n_3-1]}$. Beyond these $n_3+n_4-1$ eigenvalues, there remain five unknown Laplacian eigenvalues (regardless of whether $n_3=n_4$ or $n_3\neq n_4$).
Clearly, $n_3+1<n-1$ and $n_4+1<n-1$.
Define $S$ as the sum of the five unknown Laplacian eigenvalues.
Note that there are $n_3n_4+n_3+n_4+4$ edges in $G$.
We have $$S+(n_3+1)(n_4-1)+(n_4+1)(n_3-1)=2(n_3n_4+n_3+n_4+4), $$
which implies that $S=2(n_3+n_4)+10=2(n+1)$.
If at least three of these five eigenvalues greater than $n-1$, then $3(n-1)<S=2(n+1)$.
So, $n<5$, which contradicts $n\geq 6$.
If at most two of these five eigenvalues greater than $n-1$, then $m_G(n-g+3,n]=m_G(n-1,n]=n-4\leq 2$.
Hence, we have $n=6$, which implies that $G=G_3$. 
Since $G_3$ is a 2-partite graph, we have $\chi(G)=2$. 
By Lemma \ref{Lem chromatic number}, one can get $m_G(n-g+3,n]=m_G(5,n]\leq \chi(G)-1=1<n-g$, a contradiction. 
Similarly, if $G'' = G_4$, we can get $G=G_4$. 
By Lemma \ref{Lem up}, we have $\mu_1(G)\leq 3+2=5=n-g+3$. 
Hence, we have $m_G(n-g+3,n]=0<n-g$, a contradiction. 

\textbf{Claim 5.} There is exactly one vertex in $G\setminus C$.

Suppose $n-g\geq 2$.
If all vertices in $G\setminus C$ have same neighbor, then $G=U_t$ for some $t\geq 2$.
Suppose that $u\in G\setminus C$ and $u\sim v_1$.
Without loss of generality, we assume the Laplacian matrix $L(G)$ is ordered such that its first row correspond to $v_1$, and the final row correspond to $u$.
Then $$L(G)=\begin{pmatrix}
L(G-u) & 0_{n-1 \times 1} \\
O_{1 \times n-1} & 0
\end{pmatrix} + M',$$
where  $M' = (m_{ij})_{n \times n}$ with
$$\begin{cases}
1 & if\ i = j =1\ or\ i = j =n, \\
-1 & if\ (i,j) \in \{(1,n), (n,1)\}\\
0 & otherwise.
\end{cases}$$
By calculation, we have $\rho_2(M')=0$.
By Lemma \ref{Lem up}, we know $\rho_1(L(G-u))\leq t+3$.
By Lemma \ref{Lem Weyl}, we have
\begin{align*}
\mu_2(G)&=\rho_{2}(L(G))\\
&\leq \rho_1(L(G-u))+\rho_2(M')\\
&\leq t+3\\
&=n-g+3.
\end{align*}
Therefore, $m_G(n-g+3,n]\leq 1< n-g$, a contradiction.

If there exist vertices $u, v \in G \setminus C$ with distinct neighbor in $C$.
Suppose that $N_C(u)=\{v_i\}$ and $N_C(v)=\{v_j\}$, $i\neq j$.
If $n-g>2$, by Claim 3, we know $u\sim v$.
This contradicts with Claim 4.
Hence, $n-g=2$.

If $v_i\not\sim v_j$, we can get $\mu_1(G)\leq 3+2=5=n-g+3$ by Lemma \ref{Lem up}.
Hence, $m_G(n-g+3,n]=0<n-g$, a contradiction.
If $v_i\sim v_j$, we assume that $i=1, j=2$.
Without loss of generality, we assume the Laplacian matrix $L(G)$ is ordered such that its first row correspond to vertices $v_1$, and the second row correspond to vertex $v_2$.
Evidently, $G$ is obtainable from the path $v\sim v_2\sim v_3\cdots\sim v_g\sim v_1\sim u$ by adding edge $v_1v_2$.
Hence, we have $$L(G)=L(P_{g+2})+M'',$$
where  $M''= (m_{ij})_{n \times n}$ with
$$\begin{cases}
1 & if\ i = j =1\ or\ i = j =2, \\
-1 & if\ (i,j) \in \{(1,2), (2,1)\}\\
0 & otherwise.
\end{cases}$$
By Lemma \ref{Lem Weyl} and Lemma \ref{Lem CE}, we have
\begin{align*}
\mu_2(G)&=\rho_{2}(L(G))\\
&\leq \rho_1(L(P_{g+2}))+\rho_2(M'')\\
&< 4\\
&<n-g+3.
\end{align*}
Therefore, $m_G(n-g+3,n]\leq 1< n-g$, a contradiction.

It's easy to see if $G$ is a connected graph that satisfying Claim 1 to Claim 5, then $G$ must be $K_{3,2}$ or $U_1$, which contradicts the initial assumption that $G\not\cong K_{3,2}$ and $G\not\cong U_{1}$.
After all, we know if $G$ is a connected graph with $m_G(n-g+3,n]=n-g$, then $G\cong C_{3}$ or $G\cong K_{3,2}$ or $K\cong U_1$.
\end{proof}

\small {
	
}
\end{document}